\newtheorem{theorem}{Theorem}[section]
\newtheorem{lemma}[theorem]{Lemma}
\renewcommand{\phi}{\varphi}
\renewcommand{\rho}{\varrho}
\newcommand{\ZZ}{\mathbb{Z}}
\newcommand{\NN}{\mathbb{N}}
\newcommand{\cA}{\mathcal{A}}
\renewcommand{\leq}{\leqslant}
\renewcommand{\le}{\leqslant}
\renewcommand{\geq}{\geqslant}
\renewcommand{\ge}{\geqslant}
\newcommand{\ve}{\varepsilon}
\DeclareMathOperator{\disc}{disc}
\newcommand{\Z}{\mathbb{Z}}
\newcommand{\R}{\mathbb{R}}
\DeclareMathOperator{\Ein}{Ein}
\begin{document}

\begin{frontmatter}[classification=text]

\title{Square-free Values of Reducible
Polynomials}

\author[ARB]{Andrew R. Booker\thanks{Supported by
EPSRC grant EP/L001454/1}}
\author[TDB]{Tim D. Browning\thanks{Supported by
 ERC grant 306457}}

\begin{abstract}
We calculate  admissible values of $r$ such that a square-free
polynomial with integer coefficients, no fixed prime divisor and
irreducible factors of degree at most $3$ takes infinitely many values
that are a product of at most $r$ distinct primes.
\end{abstract}
\end{frontmatter}

\section{Introduction}

Let $H\in\ZZ[x]$ be a non-constant square-free polynomial of degree $h$ with $\kappa$ irreducible factors.
Unless there is good reason to suppose otherwise, one expects that $H(n)$ 
is square-free infinitely often. In fact, 
assuming that $H$
 has no 
{\em fixed prime divisor},  
one expects that 
$H(n)$ is a product of precisely $\kappa$ distinct primes  for infinitely many values of $n\in \ZZ$. We shall assume that $H$ has no fixed prime divisor in all that follows, by which we mean that there is no prime  $p$
such that   $p\mid H(n)$ for every $n\in \ZZ$. 

When $\kappa=1$ and $h=3$ it follows from work of Erd\H{o}s \cite{sfree} that  $H(n)$ is square-free infinitely often. 
This fact has not yet been extended to a single irreducible quartic polynomial, although work of Granville \cite{G} handles irreducible polynomials of arbitrary degree under the {\em abc conjecture}. 
On the other hand, thanks to 
work of Hooley \cite{hooley'}, 
in the setting of irreducible cubic polynomials we have an asymptotic formula 
for the number of $n\leq x$ for which $H(n)$ is square-free. 
Recent work of Reuss \cite{reuss} has substantially improved  this
asymptotic formula, with the outcome that we now have a power saving in the error term.

The topic of almost-prime values of polynomials has been studied
extensively by several authors.  The  {\em DHR weighted sieve}
of Diamond, Halberstam and Richert (see \cite{dh-applications,
dh-book}) gives an efficient means of producing values of $r$ such that
$\Omega(H(n))\leq r$ for infinitely many $n\in \ZZ$, where $\Omega(m)$
is the total number of prime factors of $m$ counted with multiplicity.
The following result gives  sufficient conditions on polynomials $H$,
not necessarily irreducible, under which $H(n)$ takes infinitely many
values that are simultaneously square-free and have at most $r$ prime
factors, for relatively small values of $r$.

\begin{theorem}\label{thm:main}
Assume that  
$H$ has no fixed prime divisor and that 
each irreducible factor of $H$ has degree at most $3$.
Then there exists $r\in \NN$ such that 
$$
\sum_{\substack{n\leq x\\ \Omega(H(n))\leq r}} \mu^2(H(n)) \gg \frac{x}{(\log x)^\kappa}.
$$
Admissible values of $r$ are recorded in Table~\ref{tab:rmin} for $h\le20$
and at \cite{sourcecode} for $h\le50$.
\end{theorem}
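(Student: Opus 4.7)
The plan is to combine the Diamond--Halberstam--Richert (DHR) weighted sieve with Reuss's power-saving version of Hooley's theorem on square-free values of irreducible cubic polynomials. Factor $H = H_1 \cdots H_\kappa$ with each $H_i$ irreducible of degree $h_i \leq 3$, and set $\rho(d) = \#\{n \bmod d : d \mid H(n)\}$. The no-fixed-prime-divisor hypothesis forces $\rho(p) < p$ for every prime $p$, while the local densities $\rho(p)/p \sim \kappa/p$ on average make the sequence $\mathcal{A} = \{H(n) : n \leq x\}$ a sifting problem of dimension $\kappa$.

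The first step is to apply the DHR weighted sieve to $\mathcal{A}$, incorporating the local square-free conditions $p^2 \nmid H(n)$ for $p \leq z$ into the sifting density, with $z$ a small power of $x$. For the smallest $r$ compatible with the resulting DHR transcendental inequalities, this yields
$$
\#\{n \leq x : \Omega(H(n)) \leq r,\ p^2 \nmid H(n) \text{ for all } p \leq z\} \gg \frac{x}{(\log x)^\kappa}.
$$
The effective sifting dimension remains $\kappa$ since the additional contributions from $\rho(p^2)/p^2$ form a convergent series and do not enlarge the sieve dimension. The admissible values of $r$ tabulated in the statement will emerge from a numerical optimisation of the sifting parameter against the DHR weight functions; this is where the bulk of the quantitative work lies.

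The second step is to bound the contribution from $n$ with $p^2 \mid H(n)$ for some $p > z$. After passing to a fixed positive-density union of residue classes modulo the squarefull part of $\disc(H)$, we may assume $p \nmid H(n)$ for each prime $p$ dividing $\disc(H)$, which in particular eliminates any issue from primes dividing the pairwise resultants of the $H_i$. It then suffices to bound, for each irreducible factor, the count $\#\{n \leq x : p^2 \mid H_i(n) \text{ for some } p > z\}$. Setting $c_i(z) = \prod_{p \leq z}(1 - \rho_i(p^2)/p^2)$ and $c_i = \lim_{z \to \infty} c_i(z)$, this count has size $(c_i(z) - c_i) x + \mathrm{error}$, where the main term is $O(x/z)$. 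For $\deg H_i \leq 2$ the error is estimated elementarily (via Möbius inversion or Estermann's argument); for $\deg H_i = 3$ one invokes Reuss's theorem to obtain a power-saving error $O(x^{1-\delta})$. Choosing $z$ a modest power of $\log x$ makes the total contribution $o(x/(\log x)^\kappa)$, and subtracting from the lower bound of step one yields the claimed estimate.

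The main obstacle is the careful numerical optimisation of the DHR sieve parameters for each value of $h$, which requires solving the transcendental system that characterises admissible weight functions and sifting limits. Since the removal of the large-prime square-free exceptions is handled by results external to the sieve optimisation and is absorbed into lower-order terms, it does not affect the admissible $r$; the tabulated values in Table 1 are determined purely by the sieve-theoretic computation.
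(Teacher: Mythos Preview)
Your plan is correct in outline and uses the same two ingredients as the paper---the DHR weighted sieve for the lower bound and Reuss's determinant-method input for the large prime squares---but you miss a simplifying observation and introduce some unnecessary detours.

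The paper's decomposition is cleaner: with $z=x^{\delta}$ for a small fixed $\delta>0$, one writes
\[
\sum_{\substack{n\le x\\ \Omega(H(n))\le r}}\mu^2(H(n))\ \ge\ \Sigma_1-\Sigma_2,
\]
where $\Sigma_1=\#\{n\le x:\Omega(H(n))\le r,\ (H(n),P(x^{\delta}))=1\}$ and $\Sigma_2=\#\{n\le x:\exists\,p>x^{\delta}\text{ with }p^2\mid H(n)\}$. The point you do not exploit is that the DHR sieve already produces integers with $(H(n),P(x^{1/v}))=1$; since this is \emph{stronger} than $p^2\nmid H(n)$ for $p\le x^{\delta}$ (as soon as $\delta<1/v$), the small-prime square-free condition is entirely free. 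There is no need to ``incorporate local square-free conditions into the sifting density'' or to argue that $\rho(p^2)/p^2$ leaves the dimension unchanged, and your residue-class restriction modulo the discriminant is likewise unnecessary: for large $x$ any $p>x^{\delta}$ is automatically coprime to the discriminant and to all the pairwise resultants. Your parameter $z$ is also stated inconsistently (first a power of $x$, then a power of $\log x$); either can be made to work, but the paper's choice $z=x^{\delta}$ keeps the bookkeeping simplest.

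On the second step, it is not quite enough to ``invoke Reuss's theorem'' as a black-box asymptotic for square-free values. The paper splits $(x^{\delta},\infty)$ into three ranges and, in the delicate middle range $p\in(x^{1-\delta},x^{1+\delta}]$, appeals to the pointwise bound on the counting function $\mathcal N(X;A,B)$ coming from Reuss's approximate determinant method (his Lemmas 3 and 7), not to his final formula. The outer ranges are handled by the trivial residue-class count and by Heath-Brown's uniform bound for integral points on curves, respectively. The upshot is a genuine power saving $\Sigma_2=O(x^{1-\eta})$, which is what one wants once $z$ is a power of $x$.
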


\begin{table}[h!]
\begin{tabular}{|r|*{20}{c}|}\hline
$h\setminus\kappa$
&1&2&3&4&5&6&7&8&9&10&11&12&13&14&15&16&17&18&19&20\\\hline
1&1&&&&&&&&&&&&&&&&&&&\\
2&2&5&&&&&&&&&&&&&&&&&&\\
3&4&6&8&&&&&&&&&&&&&&&&&\\
4&&7&10&12&&&&&&&&&&&&&&&&\\
5&&8&11&13&16&&&&&&&&&&&&&&&\\
6&&9&12&15&18&20&&&&&&&&&&&&&&\\
7&&&13&16&19&22&25&&&&&&&&&&&&&\\
8&&&14&17&20&23&26&29&&&&&&&&&&&&\\
9&&&16&19&22&25&28&31&34&&&&&&&&&&&\\
10&&&&20&23&26&29&33&36&39&&&&&&&&&&\\
11&&&&21&24&27&31&34&37&41&44&&&&&&&&&\\
12&&&&22&25&29&32&35&39&42&45&49&&&&&&&&\\
13&&&&&27&30&33&37&40&44&47&51&54&&&&&&&\\
14&&&&&28&31&35&38&42&45&49&52&56&59&&&&&&\\
15&&&&&29&33&36&40&43&47&50&54&57&61&64&&&&&\\
16&&&&&&34&37&41&45&48&52&55&59&62&66&70&&&&\\
17&&&&&&35&39&42&46&50&53&57&60&64&68&71&75&&&\\
18&&&&&&36&40&44&47&51&55&58&62&66&69&73&77&80&&\\
19&&&&&&&41&45&49&52&56&60&63&67&71&75&78&82&86&\\
20&&&&&&&42&46&50&54&57&61&65&69&73&76&80&84&88&92\\\hline
\end{tabular}
\caption{Admissible values of $r$ for $h\le20$.\label{tab:rmin}}
\end{table}

One of our motivations for this work is the following application
derived in \cite{euclid}.  Given any finite set of prime numbers, Euclid
showed how to obtain another prime not in the set. Applied iteratively
beginning with a single prime, one obtains an infinite sequence of
distinct prime numbers. The set of
all possible sequences obtained from a given starting prime $p$ has the
natural structure of a directed graph, say $G_p$, which one can then ask
questions about; e.g., is $G_p$ a tree?  Applying Theorem~\ref{thm:main}
to the polynomial 
$$
H=(x^2+x+1)(x^2+1)(x^3+x^2+2x+1),
$$
it was shown in \cite{euclid} that $G_p$ is not a tree for a positive
proportion of $p$, and in fact there are infinitely many coprime pairs
$(a_i,q_i)\in\NN^2$, with $(q_i,q_j)=1$ for $i\ne j$, such that $G_p$
has a loop of height at most $13$ for any $p\equiv a_i\;(\text{mod }q_i)$.

For any $z\geq 1$ let
$
P(z)=\prod_{p\leq z}p
$
and let $\delta>0$ be a small parameter to be chosen in due course. 
Our starting point in the proof of Theorem \ref{thm:main}
is the observation that 
$$
\sum_{\substack{n\leq x\\ \Omega(H(n))\leq r}} \mu^2(H(n))\geq 
\sum_{\substack{n\leq x\\ \Omega(H(n))\leq r\\ (H(n),P(x^\delta))=1}} 
\mu^2(H(n))\geq \Sigma_1-\Sigma_2,
$$
where 
$$
\Sigma_1=\#\left\{n\leq x:  \Omega(H(n))\leq r, ~(H(n),P(x^\delta))=1\right\}
$$
and 
$$
\Sigma_2=
\#\left\{n\leq x: \exists~p>x^\delta \text{ s.t. } p^2\mid H(n)\right\}.
$$
Our strategy is now evident. First we will  seek a lower bound for $\Sigma_1$, for which we shall invoke 
the DHR weighted sieve \cite{dh-applications} in  \S \ref{s:sieve}. 
Next, in 
\S \ref{s:reuss}, we shall prove that there exists $\eta>0$ such that 
 \begin{equation}\label{eq:sigma2}
 \Sigma_2=O(x^{1-\eta}),
\end{equation} 
provided that each irreducible factor of $H$ has degree at most $3$.
The key input here will come from the work of Reuss \cite{reuss} that we have already mentioned and which is based on the {\em approximate determinant method}.
Finally, in \S \ref{s:andy}, we shall turn to the question of optimising the value of $r$ for which the lower bound in Theorem \ref{thm:main} holds.

A well-known feature of  the weighted sieve is that any admissible
value of $r$ can only be derived through a precise analysis of  certain
integrals involving complicated functions $\sigma, f,F$  that arise as
solutions to certain differential-delay problems. These are all described
in Theorem \ref{thm:dhr} below.  Wheeler, in his thesis \cite{wheeler},
gave algorithms for solving the differential-delay problem via Gaussian
quadrature with rigorous error bounds, and some of the theoretical
work concerning the solution of this problem (see \cite{dh-book}
for a comprehensive treatment) relies on his computations.  However,
\cite{wheeler} is rather difficult to obtain and there appears to have
been no published account of Wheeler's algorithms or source code. With
this in mind, in \S\ref{s:andy} we describe an independent implementation
based on polynomial approximations. Thanks to the exponential gains in
computer power since Wheeler's thesis was written, we can afford to leave
more of the work to the computer, to the point that our error estimates
require nothing more complicated than the alternating series test or
the Lagrange form of the remainder term in Taylor's theorem.  We rely
heavily throughout on Fredrik Johansson's excellent library {\tt Arb}
\cite{arb} for arbitrary-precision interval arithmetic. Thus, we make
no assumptions about round-off error, so that, modulo bugs in the code
or computer hardware, our results are rigorous.  The end result of our
computations is the online table \cite{sourcecode} of minimum values of
$r$ for every pair $\kappa$, $h$ with $\kappa\le h\le50$, from which
Table~\ref{tab:rmin} is derived.  The reader wishing to compute $r$
for numbers outside of the tables is welcome to make use of our program,
also available at \cite{sourcecode}.

\medskip

We conclude the introduction by proving one more result used in
\cite{euclid}.  Namely, if we drop the condition $\Omega(H(n))\leq r$
then we can promote the lower bound in Theorem \ref{thm:main} to an
asymptotic formula by adapting work of Hooley \cite[Chapter~4]{hooley}
and invoking Reuss' work. To do so we write
$$
\sum_{\substack{n\leq x}} \mu^2(H(n)) =
N(x)+O(E(x)+\Sigma_2),
$$
where $\Sigma_2$ is as above and 
$N(x)$ (resp.\ $E(x)$) denotes the 
number of $n\leq x$ for which $H(n)$ is not divisible by $p^2$ for any prime $p\leq \frac{1}{6}\log x$ (resp.\ there exists a prime $p\in (\frac{1}{6}\log x,x^\delta]$ such that  $p^2\mid H(n)$).
Combining the estimate  \cite[Eq.~(125)]{hooley} for $N(x)$, with \eqref{eq:sigma2} and the estimate \cite[Eq. ~(127)]{hooley} for $E(x)$, we arrive at the following result. 

\begin{theorem}
Assume that  each irreducible factor of $H$ has degree at most $3$ and let
\begin{equation}\label{eq:rho}
\rho(q)=\#\{\nu\bmod{q}: H(\nu)\equiv 0\bmod{q}\},
\end{equation}
for any $q\in \NN$.
Then 
$$
\sum_{\substack{n\leq x}} \mu^2(H(n))  =x \prod_p
\left(1-\frac{\rho(p^2)}{p^2}\right) +O\!\left(\frac{x}{\log x}\right).
$$
\end{theorem}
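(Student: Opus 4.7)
The plan is to execute the decomposition sketched in the paragraph immediately preceding the statement. Writing
$$
\sum_{n\le x}\mu^2(H(n)) = N(x) + O(E(x)+\Sigma_2),
$$
with $N(x)$, $E(x)$, $\Sigma_2$ accounting for the failure of squarefreeness through small, medium, and large prime squares respectively, the task reduces to three independent estimates.

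First, I would treat the main term via $N(x)$, the count of $n\le x$ such that $p^2\nmid H(n)$ for every prime $p\le\tfrac{1}{6}\log x$. Inclusion-exclusion over squarefree $q$ supported on primes in this range, combined with the identity $\#\{n\le x:q^2\mid H(n)\}=\rho(q^2)x/q^2+O(\rho(q^2))$ and the multiplicativity of $\rho$, yields Hooley's \cite[Eq.~(125)]{hooley},
$$
N(x) = x\prod_{p\le\frac{1}{6}\log x}\!\left(1-\frac{\rho(p^2)}{p^2}\right) + O\!\left(\frac{x}{\log x}\right).
$$
The truncation at $\tfrac{1}{6}\log x$ is precisely tuned so that the accumulated inclusion-exclusion error remains a small power of $x$. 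Since each irreducible factor of $H$ has degree at most $3$, one has $\rho(p^2)=\rho(p)\le h$ for all $p\nmid\disc(H)$, so the Euler product $\prod_p(1-\rho(p^2)/p^2)$ converges absolutely, and extending the truncated product to all primes costs only an additional $O(1/\log x)$ in the main term.

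Second, I would bound the medium-prime contribution $E(x)$ following Hooley's \cite[Eq.~(127)]{hooley}. For each prime $p\in(\tfrac{1}{6}\log x,x^\delta]$ the number of $n\le x$ with $p^2\mid H(n)$ is $O(x/p^2+1)$ (uniformly, once $p\nmid\disc(H)$), and summing over $p$ using $\sum_{p>\frac{1}{6}\log x}1/p^2\ll 1/\log x$ together with $\sum_{p\le x^\delta}1\ll x^\delta/\log x$ gives $E(x)\ll x/\log x$, provided $\delta<1$.

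Third, the large-prime term $\Sigma_2=O(x^{1-\eta})$ is exactly the bound \eqref{eq:sigma2} established in \S\ref{s:reuss}, and is the main substantive obstacle: this is where Reuss' work, based on the approximate determinant method, is invoked and where the hypothesis that each irreducible factor has degree at most $3$ is essential. Once the three estimates are in hand, summation produces the claimed asymptotic. The hard part is therefore entirely captured by \eqref{eq:sigma2}; the remainder is a classical Hooley-style manipulation.
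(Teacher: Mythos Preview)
Your proposal is correct and follows exactly the paper's own route: the same three-piece decomposition into $N(x)$, $E(x)$, and $\Sigma_2$, with $N(x)$ and $E(x)$ handled by Hooley's estimates \cite[Eqs.~(125),\,(127)]{hooley} and $\Sigma_2$ by the bound \eqref{eq:sigma2} proved in \S\ref{s:reuss} via Reuss. The paper itself gives no more detail than the citations you have reproduced, so your expansion of the argument is faithful and complete.
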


\section{Polynomial congruences}
An important function in our work is the multiplicative arithmetic function $\rho(q)$ that was defined in \eqref{eq:rho}.
Our assumption that $H$ has no fixed prime divisor is equivalent to the statement that $\rho(p)<p$ for every prime $p$.
We henceforth assume that 
$$
H(x)=H_1(x)\cdots H_\kappa(x)
$$ 
for a system  $H_1,\dots,H_\kappa\in \ZZ[x]$ of pairwise non-proportional irreducible polynomials, with  
$d_i=\deg(H_i)$ for $1\leq i\leq \kappa$. 

It will pay dividends to consider the general
function 
$$\rho(G;q)=\#\{\nu\bmod{q}: G(\nu)\equiv 0\bmod{q}\},$$ 
associated to  any irreducible polynomial $G\in \ZZ[x]$ of degree $d$
and any $q\in \NN$. Note that 
$\rho(G;q)$ is a multiplicative function of $q$ and 
$\rho(q)=\rho(H;q)$.
Assuming that $p$ does not divide the content of $G$, we
 have the basic estimate
\begin{equation}\label{eq:upper}
\rho(G;p^k)\ll \begin{cases}
1, & \text{if $p\nmid \disc(h)$},\\
p^{(1-1/d)k}, &\text{otherwise,}
\end{cases}
\end{equation}
where the implied constant is only allowed to depend on $d$.
Here the first bound is a trivial application of Hensel's lemma, combined with the inequality
$\rho(G;p)\leq d$. The second estimate is well known (see \cite[Lemma 2]{nair}, for example).

Returning to  $\rho(p^k)$ for a prime $p$ and $k\in \NN$, 
let 
\begin{equation}\label{eq:Delta} 
\Delta=6\prod_i |\mathrm{disc}(H_i)|\prod_{i\neq j}|\mathrm{Res} (H_i,H_j)|,
\end{equation}
where $\mathrm{Res} (H_i,H_j)$ is the resultant of $H_i$ and $H_j$. 
This an integer which vanishes if and only if $H_i$ and $H_j$ share a common root. 
Our assumptions on the system $H_1,\dots,H_\kappa$  imply that $\Delta$ is a positive integer and we henceforth allow all of our implied constants to depend on it. Thus we clearly have $\rho(p^k)=O_k(1)$ if  $p\mid \Delta$, where the implied constant depends on $k$. If
 $p\nmid \Delta$, then $p$ doesn't divide any of the resultants $\mathrm{Res}(H_i,H_j)$, and so 
$$
\rho(p^k)\leq \sum_{i=1}^\kappa  \rho(H_i,p^k)
\ll 1,
$$
by \eqref{eq:upper}. Hence
it follows that 
\begin{equation}\label{eq:celest}
\rho(p^k)=O_k(1),
\end{equation}
for any prime $p$ and any $k\in \NN$.

\section{The DHR weighted sieve}\label{s:sieve}

We begin by recalling the set-up for a version of the DHR weighted sieve \cite{dh-applications} that we shall use.
Let $\cA$ be a finite sequence of integers. 
Under suitable hypotheses, the DHR weighted sieve produces a lower bound 
for the cardinality
$$
\#\{a\in \cA: \Omega(a)\leq r, (a,P(z))=1\},
$$
for suitable $r$ and $z$. In fact, in the literature, it is usually recorded as a lower bound
for $\#\{a\in \cA: \Omega(a)\leq r\}$, but the results are actually valid for the restricted cardinality in which any $a\in \cA$ is forbidden to have prime factors less than $z$. (See the proof of \cite[Thm.~11.1]{dh-book} for elucidation of this point.)
In order to get a lower bound for $\Sigma_1$ we take 
$$
\cA=\{H(n): n\leq x\} 
$$
and $z=x^{1/v}$, for a  parameter $v>1$.

When $\kappa=1$, one can solve the differential-delay problem explicitly,
and this leads to the conclusion that $r=h+1$ is admissible for every
$h\ge1$ (see \cite{richert}).  Moreover, for $\kappa=1$ and $h\le 2$ we may
take $r=h$ by the prime number theorem for arithmetic progressions and
\cite{lemke-oliver}.  These values are reflected in Table~\ref{tab:rmin}.

For $\kappa\ge2$ we refer to \cite{dh-applications}. One sees that 
we have a sieve of dimension $\kappa$, that 
all of the hypotheses of \cite[Thm~1]{dh-applications} 
are met with 
$X=x$, $\omega(d)=\rho(d)$, $\tau=1$ and that the choice $\mu=h$ is acceptable.
For any $v,w\in\R_{>0}$ satisfying $v>w+1$,
it therefore follows  that
$$
\#\{n\leq x:\Omega(H(n))\le r, (H(n), P(x^{1/v}) )=1\}
\gg\frac{x}{(\log{x})^\kappa},
$$
provided that $r\ge\lfloor R(v,w)\rfloor$, where
\begin{equation}\label{eqn:Rdef}
R(v,w)=\frac{hv}{v-w}+\frac{\kappa}{f(v)}\int_w^{v-1}F(u)
\big((v-u)^{-1}-(v-w)^{-1}\big)\,du
\end{equation}
and $f,F$ is a pair of solutions to
the following differential-delay problem:
\begin{theorem}[Diamond, Halberstam and Richert \cite{dhr1,dhr2,dhr3}]
\label{thm:dhr}
Let $\kappa>1$ be a real number, and let
$\sigma:\R_{>0}\to\R$ be the continuous solution of the system
\begin{equation}\label{eqn:sigmadef}
\begin{aligned}
u^{-\kappa}\sigma(u)&=\frac{(2e^\gamma)^{-\kappa}}{\Gamma(1+\kappa)}
&\mbox{for }u\in(0,2],\\
\frac{d}{du}\bigl(u^{-\kappa}\sigma(u)\bigr)
&=-\kappa u^{-\kappa-1}\sigma(u-2)
&\mbox{for }u>2.
\end{aligned}
\end{equation}
Then there are real numbers $\alpha>\beta>2$ such that the system
\begin{equation}\label{eqn:Ffdef}
\begin{aligned}
F(u)&=\frac1{\sigma(u)}&\mbox{for }u\in(0,\alpha],\\
f(u)&=0&\mbox{for }u\in(0,\beta],\\
\frac{d}{du}\bigl(u^\kappa F(u)\bigr)&=\kappa u^{\kappa-1}f(u-1)
&\mbox{for }u>\alpha,\\
\frac{d}{du}\bigl(u^\kappa f(u)\bigr)&=\kappa u^{\kappa-1}F(u-1)
&\mbox{for }u>\beta
\end{aligned}
\end{equation}
has continuous solutions $F,f:\R_{>0}\to\R$ such that
$F(u)$ decreases monotonically, $f(u)$ increases monotonically, and
$$
F(u)=1+O(e^{-u}),\quad
f(u)=1+O(e^{-u}).
$$
\end{theorem}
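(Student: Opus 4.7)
The plan is to establish existence and regularity of $\sigma$, $F$, $f$ in three stages, following the standard scheme for linear differential-delay problems.

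First, I construct $\sigma$ by successive integration. The formula on $(0,2]$ is given outright, and for $u > 2$ I integrate the defining relation
\begin{equation*}
\frac{d}{du}\bigl(u^{-\kappa}\sigma(u)\bigr) = -\kappa u^{-\kappa-1}\sigma(u-2)
\end{equation*}
from the left endpoint of each interval $(2k,2k+2]$ upward, so that the right-hand side involves only values of $\sigma$ already known on the preceding interval. This produces a unique continuous extension, and a straightforward induction shows $\sigma$ remains strictly positive throughout, so its reciprocal is well-defined.

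Second, I set $F(u) = 1/\sigma(u)$ on $(0, \alpha]$ and $f(u) = 0$ on $(0, \beta]$, where the constants $\alpha > \beta > 2$ are chosen to ensure that extending by the differential-delay equations on $u > \alpha$ and $u > \beta$ produces continuous functions with the right monotonicity. In particular $\beta$ must be chosen so that $1/\sigma$ is consistent with the $F$-equation under the assumption $f \equiv 0$ on $(0,\beta]$ (i.e.\ so that $(u^\kappa/\sigma(u))' = 0$ on $(\beta,\alpha]$), and an analogous compatibility fixes $\alpha$. Once these parameters are in hand, $F$ and $f$ are extended simultaneously on intervals $(n, n+1]$ for $n \ge \alpha$ by integrating the coupled system.

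The main obstacle is the third stage: proving monotonicity together with the asymptotic $F(u), f(u) = 1 + O(e^{-u})$. The standard route is via the Laplace transforms $\hat{F}(s) = \int_0^\infty e^{-su}F(u)\,du$ and $\hat{f}(s)$, which satisfy a pair of algebraic functional equations inherited from the delay-differential system. Solving this system yields meromorphic expressions for $\hat{F}$ and $\hat{f}$; inverting by a Bromwich contour and shifting it into $\Re(s) < 0$ picks up the residue at $s = 0$ (contributing the constant $1$), with all other singularities lying in $\Re(s) \le -1$, which yields the stated exponential error term. The monotonicity is then proved by a direct induction on intervals $(n, n+1]$: from the integrated delay equations one shows that $F - 1$ and $1 - f$ retain controlled signs on successive unit intervals, and the sign of $(u^\kappa F(u))'$ and $(u^\kappa f(u))'$ together with $F, f \to 1$ forces $F$ to be monotonically decreasing and $f$ to be monotonically increasing.
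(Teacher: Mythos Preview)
The paper does not prove this theorem at all: it is quoted from the cited works of Diamond, Halberstam and Richert, and the paper only explains (in \S5) how to \emph{compute} the solutions numerically via the adjoint functions $p$, $q$ and the integrals $\widetilde{\Pi}$, $\widetilde{\Xi}$. So there is no ``paper's own proof'' to compare against, and your sketch should be measured against the original DHR papers and the monograph \cite{dh-book}.

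That said, your second stage misidentifies what determines $\alpha$ and $\beta$. You write that $\beta$ is fixed by requiring $(u^\kappa/\sigma(u))'=0$ on $(\beta,\alpha]$, but no such condition appears in the theory: on $(0,\alpha]$ one simply \emph{defines} $F=1/\sigma$, and the differential relation for $F$ only takes effect for $u>\alpha$, so there is nothing to match on $(\beta,\alpha]$. In fact, for \emph{any} choice of $\alpha>\beta>2$ the system \eqref{eqn:Ffdef} has a unique continuous solution obtained by forward integration; what singles out the correct $(\alpha,\beta)$ is the boundary behaviour at infinity, namely that $F(u)\to1$ and $f(u)\to1$. In the DHR framework this is encoded not via Laplace transforms but via the adjoint functions $p$, $q$ of \eqref{eqn:pdiffeq}--\eqref{eqn:qdef}: one forms the inner-product quantities $\widetilde{\Pi}$, $\widetilde{\Xi}$ of \eqref{eqn:pixidef}, and $(\alpha,\beta)$ is determined by the pair of transcendental equations \eqref{eqn:alphabetadef}. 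These equations are precisely what guarantees the correct limits (and, with further work, the monotonicity). Your Laplace-transform plan in stage three is plausible in spirit, but you have not said how the transforms could be computed or why the non-zero singularities lie in $\Re(s)\le -1$; in the actual proofs the exponential decay of $F-1$ and $1-f$ comes instead from iterated use of the delay equations combined with the adjoint relations, and the monotonicity requires a rather delicate inductive argument that your final paragraph only gestures at.
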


Now it is clear that 
$$
\Sigma_1\geq \#\{n\leq x:\Omega(H(n))\le r, (H(n), P(x^{1/v}) )=1\}
$$
if $\delta<1/v$.  
We therefore conclude that 
$\Sigma_1\gg x/(\log x)^\kappa$ provided that $r\geq \lfloor R(v,w)\rfloor$ and $\delta<1/v$, which is plainly satisfactory for Theorem \ref{thm:main}.

\section{Large square divisors}\label{s:reuss}

The goal of this section is to prove the upper bound \eqref{eq:sigma2}  for $\Sigma_2$.
Our treatment is modelled on an  argument of  Hooley \cite[Chapter~4]{hooley}.
We partition the interval $(x^\delta,\infty)$ into $I_1\cup I_2\cup I_3$, where
$$
I_1=(x^\delta, x^{1-\delta}], \quad 
I_2=(x^{1-\delta}, x^{1+\delta}], \quad 
I_3=(x^{1+\delta}, \infty).
$$
We clearly have 
$$
\Sigma_2\leq  \Sigma_{2,1}+\Sigma_{2,2}+\Sigma_{2,3},
$$
where $\Sigma_{2,i}$ is the number of $n\leq x$ such that $p^2\mid H(n)$ for some  prime $p\in I_i$.

We now make the further assumption that $\delta<1/11$.
Once taken together, 
the following three results suffice to establish \eqref{eq:sigma2}, as required to complete the proof of  Theorem~\ref{thm:main}.

\begin{lemma}
We have $\Sigma_{2,1}=O(x^{1-\delta})$.
\end{lemma}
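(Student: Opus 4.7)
The plan is to invoke the standard sieve-style upper bound: for each prime $p$, the number of integers $n\leq x$ with $p^2\mid H(n)$ is at most $\rho(p^2)(x/p^2 + 1)$, obtained by counting the admissible residue classes modulo $p^2$ (there are $\rho(p^2)$ of them, by \eqref{eq:rho}) and multiplying by the number of $n\leq x$ in each such class. Summing over $p\in I_1$ gives
$$
\Sigma_{2,1} \leq \sum_{x^\delta < p \leq x^{1-\delta}} \rho(p^2)\left(\frac{x}{p^2} + 1\right),
$$
and the task reduces to showing that this expression is $O(x^{1-\delta})$.

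The next step is to split the sum according to whether $p\mid \Delta$ or $p\nmid\Delta$, where $\Delta$ is the quantity defined in \eqref{eq:Delta}. The finitely many terms with $p\mid\Delta$ contribute $O(x^{1-2\delta})$, since $\rho(p^2)=O(1)$ by \eqref{eq:celest} and $p^2>x^{2\delta}$; this is swallowed by $O(x^{1-\delta})$. For $p\nmid\Delta$, the first (Hensel) branch of \eqref{eq:upper} applied to each irreducible factor $H_i$ yields $\rho(H_i;p^2)\ll 1$, hence $\rho(p^2)\ll 1$ by the union bound used in \S2. The remaining sum then splits into $x\sum_{p>x^\delta} p^{-2}\ll x\cdot x^{-\delta}$, via comparison with $\sum_{n>x^\delta}n^{-2}$, together with $\sum_{p\leq x^{1-\delta}} 1 \ll x^{1-\delta}/\log x$ by Chebyshev's bound. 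Combining these gives the claim.

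No serious obstacle arises in this range: the primes in $I_1$ are small enough that $x/p^2$ is at least a moderately large power of $x$, so the trivial count of residue classes modulo $p^2$ already loses nothing, and the degree restriction $d\leq 3$ is nowhere required. The real difficulty is deferred to the remaining ranges $I_2$ (primes near $\sqrt{x}$, where the trivial count degenerates into a nontrivial problem about integer points) and $I_3$ (primes of size $p^2\gg x$, where one cannot even afford a single solution per prime); there one must bring in the approximate determinant method of Reuss, and it is at that point that the cubic bound on degrees of irreducible factors becomes essential.
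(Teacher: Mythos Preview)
Your argument is correct and follows the same approach as the paper: bound the number of $n\leq x$ with $p^2\mid H(n)$ by $\rho(p^2)(x/p^2+O(1))$, invoke $\rho(p^2)=O(1)$, and sum over $p\in I_1$. The only difference is that your case split according to whether $p\mid\Delta$ is unnecessary, since \eqref{eq:celest} already gives $\rho(p^2)=O(1)$ uniformly for all primes; the paper simply cites \eqref{eq:celest} once and is done.
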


\begin{proof}
Breaking the sum into residue classes modulo $p^2$, we find that 
$$
\Sigma_{2,1}\leq \sum_{p\in I_1} \sum_{\substack{\nu \bmod{p^2}\\ H(\nu)\equiv 0\bmod{p^2}}} 
\#\{n\leq x: n\equiv \nu \bmod{p^2}\}.
$$
The inner cardinality is $x/p^2+O(1)$. Hence
$$
\Sigma_{2,1}\ll \sum_{p\in I_1} \left(\frac{x}{p^2}+ 1\right) \ll x^{1-\delta},
$$
since $\rho(p^2)=O(1)$ by \eqref{eq:celest}. This establishes the result.
\end{proof}

 \begin{lemma}
We have $\Sigma_{2,3}=O(x^{1-\delta+\ve})$, for any $\ve>0$. 
\end{lemma}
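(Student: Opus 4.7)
The plan is to reduce $\Sigma_{2,3}$ to counting integer points on a one-parameter family of cubic curves, and then to invoke the estimates of Reuss \cite{reuss}, following the general strategy of Hooley \cite[Ch.~4]{hooley}.

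First, I would restrict attention to a single cubic irreducible factor of $H$. For a prime $p > x^{1+\delta}$ with $p^2 \mid H(n)$ and $n \leq x$, either $p^2 \mid H_i(n)$ for a single index $i$, or $p$ divides $H_i(n)$ and $H_j(n)$ for distinct $i,j$. The latter case, after removing the $O(1)$ values of $n$ at which $H_i$ or $H_j$ vanishes, forces $p \mid \mathrm{Res}(H_i, H_j) \mid \Delta$, which is impossible for $p > x^{1+\delta}$ and $x$ large. In the former case, $|H_i(n)| \ll x^{d_i}$ combined with $p^2 > x^{2+2\delta}$ forces $d_i \geq 3$, so $d_i=3$.

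Fix such an $H_i$ and note that each $n \leq x$ with $H_i(n) \neq 0$ admits at most one prime $p > x^{1+\delta}$ with $p^2 \mid H_i(n)$: two such primes would give $x^{4+4\delta} < p_1^2 p_2^2 \leq |H_i(n)| \ll x^3$, a contradiction. Writing $s := H_i(n)/p^2$, which is a nonzero integer with $|s| \leq C x^{1-2\delta}$, each counted pair $(n, p)$ corresponds to an integer point on the affine cubic curve $C_s : H_i(X) - s Y^2 = 0$ with $|X| \leq x$ and $Y > x^{1+\delta}$.

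The heart of the argument is then to estimate the total number of integer points on these curves, summed over $s$. The principal obstacle, and the step where I would invoke Reuss's work \cite{reuss} most essentially, is a uniform-in-$s$ bound on the number of integer points on $C_s$ strong enough that summing over $|s| \leq C x^{1-2\delta}$ yields the desired estimate. A direct application of the Bombieri--Pila determinant method to each $C_s$ individually appears insufficient; Reuss's adaptation of the approximate determinant method to this one-parameter family of cubic curves provides precisely the uniform bound needed to arrive at $O(x^{1-\delta+\varepsilon})$ after summation over $s$.
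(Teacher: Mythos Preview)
Your reduction to a single cubic factor $H_i$ and the parametrisation $H_i(n)=p^2 s$ with $|s|\ll x^{1-2\delta}$ matches the paper exactly. However, the paper does \emph{not} use Reuss for $\Sigma_{2,3}$; Reuss is invoked only for the middle range $\Sigma_{2,2}$. For $\Sigma_{2,3}$ the paper stays with a curve-by-curve argument, but inserts one idea you are missing: for each $m$ (your $s$), it first breaks $n\le x$ into the $\rho(H_i,m)$ residue classes $\nu\bmod m$ with $H_i(\nu)\equiv 0\pmod m$, and writes $n=\nu+mu$ with $|u|\le x/m+1$. Heath-Brown's determinant-method bound \cite[Thm.~15]{cime} applied to $F(u,v)=H_i(\nu+mu)-mv^2$ then gives $\ll (x/m)^{1/2+\ve}$ points per residue class, and summing $\rho(H_i,m)(x/m)^{1/2+\ve}$ over $m\ll x^{1-2\delta}$ (handling the cube-full part of $m$ separately) yields the lemma. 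So your assertion that ``a direct application of the Bombieri--Pila determinant method to each $C_s$ individually appears insufficient'' is only half right: it fails in the naive form with $|X|\le x$, but succeeds once the range of the free variable is cut down to $x/m$ via the residue trick.

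That said, your proposed route via Reuss is viable too. Dyadically decomposing $p\sim A$ and $s\sim B$ with $A^2B\asymp X^3$, $x^{1+\delta}\ll A\ll x^{3/2}$, one checks that the constraint $(AB)^{3/11}X^{-2/11}\ll M\ll\min\{X^{1/2},A^{1/2}\}$ from \cite[Lemma~7]{reuss} is satisfiable (since $AB\ll x^{2-\delta}$ and $A\ge x$), and taking $M$ minimal gives $\mathcal N(X;A,B)\ll x^{10/11+\ve}$, which with $\delta<1/11$ is even a little sharper than what the paper obtains. The trade-off is that the paper's Heath-Brown argument is more elementary and self-contained, while your approach leans on the heavier machinery of \cite{reuss} a second time; either way you should make the dyadic verification explicit rather than asserting that Reuss ``provides precisely the uniform bound needed''.
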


\begin{proof}
The contribution to $\Sigma_{2,3}$ from $n\leq x$ such that $H(n)=0$ is $O(1)$, which is satisfactory. Hence we may focus on $n$ for which  $H(n)\neq 0$.
We assume that $x$ is large so that $p\nmid \Delta$, whenever $p>x^{1+\delta}$, 
where $\Delta$ is given by \eqref{eq:Delta}. 
 But then $p^2\mid H(n)$ implies that  
$p^2\mid H_i(n)$ for some $i\in \{1,\dots,\kappa\}$, 
since $p$ doesn't divide any of the resultants $\mathrm{Res}(H_i,H_j)$. 
In particular only the irreducible factors of degree $3$ occur, since 
$x^{d_i} \gg |H_i(n)|\geq p^2 \gg x^{2+2\delta}$.
It follows that
\begin{align*}
\Sigma_{2,3}(x)
&\leq \sum_{\substack{1\leq i\leq \kappa\\ d_i=3}}\#\{(n,p,m): H_i(n)=p^2m, ~n\leq x, ~p>x^{1+\delta}, ~m\ll x^{1-2\delta}\}\\
&\leq \sum_{\substack{1\leq i\leq \kappa\\ d_i=3}}\sum_{m\ll x^{1-2\delta}} \#\{n\leq x: H_i(n)\equiv 0 \bmod{m}, ~H_i(n)/m=\square\}.
\end{align*}
Breaking into residue classes modulo $m$ the inner cardinality is seen to be
$$
\sum_{\substack{\nu\bmod{m}\\ H_i(\nu)\equiv0 \bmod{m}}}
\#\{n\leq x: n\equiv \nu\bmod{m}, ~H_i(n)/m=\square\}.
$$
We make the change of variables $n=\nu+mu$ for $|u|\leq x/m+1$.

At this point we call upon work of Heath-Brown 
\cite[Thm.~15]{cime} to estimate the inner cardinality. 
Given $\ve>0$  and an absolutely irreducible polynomial $F\in \ZZ[u,v]$ of degree $d$,  
this shows that  there are at most
$$
\ll  (UV)^\ve
\exp\left( \frac{\log U\log V}{\log T}\right)
$$
choices of $(u,v)\in \ZZ^2$ such that $|u|\leq U$, $|v|\leq V$ and $F(u,v)=0$. 
Here $T$ is defined to be the maximum of $U^{e_1}V^{e_2}$, taken over all monomials $u^{e_1}v^{e_2}$ which appear in $F(u,v)$ with non-zero coefficient.  
Moreover,  the implied constant is only allowed to depend on $d$ and $\ve$.
We shall apply this bound with $U=x/m+1$ and 
$F(u,v)=H_i(\nu+mu)-mv^2$. In particular we may take $T\geq V^2$. Thus it follows that 
\begin{align*}
\Sigma_{2,3}(x)
&\ll \sum_{\substack{1\leq i\leq \kappa\\ d_i=3}}\sum_{m\ll x^{1-2\delta}} 
\rho(H_i,m)
\left(\frac{x}{m}\right)^{1/2+\ve},
\end{align*}
for any $\ve>0$.
We now factorise $m=st$ where $s$ is cube-free and $t$ is cube-full, with $(s,t)=1$. Then 
\begin{align*}
\Sigma_{2,3}(x)
&\ll x^{1/2+\ve}\sum_{\substack{1\leq i\leq \kappa\\ d_i=3}}\sum_{\substack{t\ll x^{1-2\delta}\\ \text{$t$  cube-full}}} 
\frac{\rho(H_i,t)}{\sqrt{t}}
\sum_{\substack{s\ll x^{1-2\delta}/t\\ \text{$s$  cube-free}}} 
\frac{\rho(H_i,s)}{\sqrt{s}}.
\end{align*}
Now it follows from \eqref{eq:upper} that 
$\rho(H_i,s)\ll s^\ve$ and 
$\rho(H_i,t)\ll t^{2/3+\ve}$. Hence 
\begin{align*}
\Sigma_{2,3}(x)
&\ll x^{1/2+\ve}\sum_{\substack{t\ll x^{1-2\delta}\\ \text{$t$  cube-full}}} 
\frac{t^{2/3+\ve}}{\sqrt{t}}
\left(\frac{x^{1-2\delta}}{t}\right)^{1/2+\ve}\ll x^{1-\delta/2+3\ve},
\end{align*}
since 
$$
\sum_{\substack{t\leq T \\ \text{$t$  cube-full}}} \frac{1}{t^{1/3}}\ll T^\ve.
$$
We complete the proof of the lemma on redefining the choice of $\ve$.
\end{proof}

 \begin{lemma}
We have  $\Sigma_{2,2}=O(x^{10/11+\delta})$.
\end{lemma}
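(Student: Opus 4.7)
The plan is to decompose $\Sigma_{2,2}$ according to which irreducible factor $H_i$ of $H$ carries the squared prime, then treat the three cases $d_i=1,2,3$ separately. The first two cases will fall to elementary arguments combined with Heath-Brown's determinant bound (exactly as in the treatment of $\Sigma_{2,3}$), while the cubic case is the crux and will rely on Reuss's approximate determinant method.

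First, for $x$ sufficiently large every $p\in I_2$ is coprime to the integer $\Delta$ of \eqref{eq:Delta}, so $p\nmid\mathrm{Res}(H_i,H_j)$ for $i\ne j$ and the condition $p^2\mid H(n)$ with $p\in I_2$ forces $p^2\mid H_i(n)$ for exactly one factor $H_i$. Writing $\Sigma^{(i)}_{2,2}$ for the number of $n\le x$ with $p^2\mid H_i(n)$ for some $p\in I_2$, we then have $\Sigma_{2,2}\le\sum_{i=1}^\kappa\Sigma^{(i)}_{2,2}$, and it suffices to bound each $\Sigma^{(i)}_{2,2}$ separately.

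For $d_i=1$, the inequality $|H_i(n)|\ll x<p^2$ (which holds since $p^2>x^{2-2\delta}>x$ for $\delta<1/2$) forces $H_i(n)=0$ and the count is $O(1)$. For $d_i=2$, I would write $H_i(n)=mp^2$ with $|m|\ll x^{2\delta}$ and apply Heath-Brown's Theorem~15 of \cite{cime} to the polynomial $F(n,p)=H_i(n)-mp^2$; one checks by completing the square in $n$ that $F$ is absolutely irreducible for every nonzero $m$. Taking $U=x$, $V=x^{1+\delta}$ and $T\ge V^2=x^{2+2\delta}$ gives $O(x^{1/2+\ve})$ pairs $(n,p)$ per $m$, and summing over the $O(x^{2\delta})$ choices of $m$ produces $\Sigma^{(i)}_{2,2}\ll x^{1/2+2\delta+\ve}$, comfortably within the target $x^{10/11+\delta}$ once $\delta<1/11$.

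The main obstacle is the cubic case $d_i=3$. Here the monomial $n^3$ in $F(n,p)=H_i(n)-mp^2$ forces $T\ge U^3=x^3$, so Heath-Brown's theorem supplies only $O(x^{(1+\delta)/3+\ve})$ solutions per $m$; summed over $|m|\ll x^{1+2\delta}$ this exceeds the trivial bound $O(x)$ and the method is useless on its own. The right input is the approximate determinant method of Reuss \cite{reuss}, which was designed for precisely this type of sum: by a refined lattice-point analysis of the cubic congruence $H_i(n)\equiv0\pmod{p^2}$ for $p$ in the middle range, it delivers the power-saving estimate $\Sigma^{(i)}_{2,2}\ll x^{10/11+O(\delta)}$. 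Combining the three cases yields $\Sigma_{2,2}=O(x^{10/11+\delta})$.
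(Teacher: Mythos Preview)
Your strategy matches the paper's: split by irreducible factor, dispose of $d_i\le 2$ via Heath-Brown's determinant bound (the paper groups $d_i=1,2$ together and reduces modulo $m$ first as in the $\Sigma_{2,3}$ argument, but your direct variant for $d_i=2$ works equally well and gives a comparable bound), and invoke Reuss for $d_i=3$. The one place where you are vaguer than the paper is the cubic step: Reuss's main theorem does not hand you $\Sigma_{2,2}^{(i)}\ll x^{10/11+O(\delta)}$ as a black box; rather, the paper dyadically localises $n\sim X$, $a\sim A$, $b\sim B$ with $H_i(n)=a^2b$ and $\mu^2(a)=1$, applies the intermediate estimate $\mathcal{N}(X;A,B)\ll M^{2/3}X^{2/3+\ve}$ from \cite[p.~283]{reuss}, and then chooses $M=X^{4/11+\delta}$ (verifying it satisfies the constraint $(AB)^{3/11}X^{-2/11}\ll M\ll\min\{X^{1/2},A^{1/2}\}$ of \cite[Lemma~7]{reuss}) to extract the exponent $10/11$.
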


\begin{proof}
As previously, if  $x$ is large enough we have  $p\nmid \Delta$ whenever $p\in I_2$. 
Thus 
\begin{align*}
\Sigma_{2,2}(x)
&\leq \sum_{\substack{1\leq i\leq \kappa}}
\#\left\{n\leq x: \exists~p\in I_2 \text{ s.t. } p^2\mid H_i(n)\right\}=\sum_{1\leq i\leq \kappa} \Sigma_{2,2}^{(i)},
\end{align*}
say. 
Suppose first that 
$i\in \{1,\dots,\kappa\}$ is such that 
$d_i=\deg(H_i)\leq 2$. Then we can assume that $H_i(n)=p^2m$ with $m\ll x^2/x^{2(1-\delta)}=x^{2\delta}$.
Arguing as in the proof of the previous lemma we find that 
$$
\Sigma_{2,2}^{(i)}\ll \sum_{m\ll x^{2\delta}} \rho(H_i,m)\left(\frac{x}{m}\right)^{1/2+\ve}.
$$
Breaking $m$ into its  cube-free and cube-full part, we are easily led to the 
satisfactory contribution $\Sigma_{2,2}^{(i)}=O(x^{1/2+\delta+2\ve})$.

Now suppose that $i\in \{1,\dots,\kappa\}$ is such that $d_i=3$. 
To estimate $\Sigma_{2,2}^{(i)}$ we shall call upon work of Reuss \cite{reuss}.
Thus we have  the overall contribution 
$$
\Sigma_{2,2}^{(i)}\ll x^\ve \max_{X\ll x}\max_{\substack{X^{1-\delta}\ll A\ll  X^{1+\delta}\\
X^{1-2\delta}\ll B\ll  X^{1+2\delta}}} \mathcal{N}(X;A,B),
$$
where
$$
\mathcal{N}(X;A,B)
=
\#\left\{(n,a,b): 
n\sim X, ~a\sim A, ~b\sim B, ~\mu^2(a)=1, ~H_i(n)=a^2b
\right\}
$$
in the notation of \cite[Lemma 3]{reuss}.
Appealing to \cite[page 283]{reuss} with $d=3$ one finds that 
$
\mathcal{N}(X;A,B)\ll M^{2/3}X^{2/3+\ve},
$
where $M$ is any choice of parameter satisfying the condition 
$$
(AB)^{3/11}X^{-2/11}\ll M\ll \min \{X^{1/2},A^{1/2}\}
$$
from \cite[Lemma 7]{reuss}. A moment's thought shows that $M=X^{4/11+\delta}$ is admissible, which leads to the bound 
$\mathcal{N}(X;A,B)\ll X^{10/11+2\delta/3+\ve}$. 
We conclude the proof of the lemma by 
inserting this into our bound for $\Sigma_{2,2}^{(i)}$ and taking $\ve$ sufficiently small in terms of $\delta$.
\end{proof}

\section{Numerical estimates}\label{s:andy}
In this section, we describe how to compute rigorous numerical
approximations of \eqref{eqn:Rdef}.  
Since our main application requires
only integer values of $\kappa\ge2$, we make this restriction for
convenience. However, our approach should work just as well for arbitrary
real $\kappa\ge1$, and in fact one could likely treat $\kappa$ as a
variable. This might be useful, for instance, for extending the results
of \cite{dh-parameters} to small $\kappa$. (On the other hand, recent
work of Franze \cite{franze} and Blight \cite{blight} has shown that
lower-bound sieves that are superior to the DHR sieve are possible once
$\kappa\ge3$, so such an extension is likely to be of limited interest.)

\subsection{Generalised polynomial representations of continuous
functions}
\label{sec:powerseries}
We begin with some generalities on representing continuous functions
by (generalised) polynomials before getting into the details of
computing \eqref{eqn:Rdef} below.

Let $f:[0,1]\to\R$ be a continuous function.
For a fixed $N\in\Z_{\ge0}$,
we consider representations of $f$ of the form
\begin{equation}\label{eqn:fseries}
f(z)=\sum_{n=0}^Nf_n(z)z^n,
\end{equation}
where each $f_n$ is a bounded function on $[0,1]$. For instance, if $f$ is
smooth then there is such a representation with
$f_n=\frac{f^{(n)}(0)}{n!}$ for $n<N$ and $f_N$ smooth.
We represent each coefficient $f_n$ by an interval with rational endpoints
containing the image $f_n([0,1])$.

Given an interval $[a,b]$, let $\Theta([a,b])$ denote any function
(possibly different at each occurrence) whose image lies in $[a,b]$.
The algebraic and analytic operations on series of the form
\eqref{eqn:fseries} are implemented in a straightforward manner. We
mention a few in particular:
\begin{itemize}
\item If
$f(z)=\sum_{n=0}^Nf_n(z)z^n$
and
$g(z)=\sum_{n=0}^Ng_n(z)z^n$
then
$$
f(z)g(z)=\sum_{n=0}^Nh_n(z)z^n,
$$
where
$h_n(z)=\sum_{i+j=n}f_i(z)g_j(z)$ for $n<N$ and
$$
h_N(z)=\sum_{i+j\ge N}f_i(z)g_j(z)z^{i+j-N}.
$$
We obtain a bounding interval for $h_N$ by evaluating the above in
interval arithmetic with $z$ replaced by $\Theta([0,1])$.
\item
If $f$ is known to be bounded away from $0$ then we may compute a
representation for its reciprocal via
\begin{equation}\label{eqn:reciprocal}
\frac1{f(z)}=\sum_{n=0}^Nk_n(z)z^n,
\end{equation}
where $k_n$ is defined for $n<N$ by the relation
$$
\sum_{n=0}^{N-1}f_nz^n\cdot\sum_{n=0}^{N-1}k_nz^n
\equiv1\pmod{z^N},
$$
and
$$
k_N(z)=-\frac1{f(z)}\sum_{\substack{i\le N,j<N\\i+j\ge N}}
f_i(z)k_j(z)z^{i+j-N}.
$$
To compute a bounding interval for $k_N$ we again use interval arithmetic,
replacing $f(z)$ in the denominator by a known bounding interval.
\item
We have
$$
\exp f(z)=e^{f_0(z)}\prod_{n=1}^N
\left(\sum_{j=0}^{\lceil{N/n}\rceil-1}\frac{f_n(z)^j}{j!}z^{nj}+R_{n,N}(z)z^N\right),
$$
where
$$
R_{n,N}(z)=
\sum_{j=\lceil{N/n}\rceil}^\infty\frac{f_n(z)^jz^{nj-N}}{j!}.
$$
Using the Lagrange form of the remainder term in Taylor's theorem, we
have
\begin{align*}
R_{n,N}(z)&=\frac{f_n(z)^{\lceil{N/n}\rceil}}
{\lceil{N/n}\rceil!}\exp(\Theta([0,1])f_n(z)z^n)z^{n\lceil{N/n}\rceil-N}\\
&=\Theta([0,1])\frac{f_n(z)^{\lceil{N/n}\rceil}}
{\lceil{N/n}\rceil!}\exp(\Theta([0,1])f_n(z)).
\end{align*}
Given a bounding interval for
$f_n$, this yields one for $R_{n,N}$ via interval arithmetic.
\item
Let $I_n$ be a bounding interval for $f_n$. Then we have
\begin{align*}
\int_0^z f(x)\,dx=\sum_{n=0}^N\int_0^zf_n(x)x^n\,dx
&=\sum_{n=0}^N\frac{\Theta(I_n)}{n+1}z^{n+1}\\
&=\sum_{n=1}^N\frac{\Theta(I_{n-1})}{n}z^n
+\Theta([0,1])\frac{\Theta(I_N)}{N+1}z^N.
\end{align*}
\end{itemize}
We conclude with two useful examples employing the Lagrange form of
the remainder term. First, if $\nu\in\R_{>0}$ then
$$
(1+x)^{-\nu}=\sum_{n=0}^{N-1}(-1)^n{{n+\nu-1}\choose{n}}x^n
+(-1)^N{{N+\nu-1}\choose{N}}\Theta([0,1])x^N
\quad\mbox{for }x\in\R_{\ge0}.
$$
Similarly,
$$
e^x=\sum_{n=0}^{N-1}\frac{x^n}{n!}+\frac{e^{\Theta([0,1])x}x^N}{N!}
\quad\mbox{for }x\in\R.
$$

\subsection{Solving the differential-delay system}
Next we describe how to compute the solution to
Theorem~\ref{thm:dhr} for integral $\kappa\ge2$,
beginning with $\alpha$ and $\beta$.
These are found via a number of auxiliary functions. First, we have
$p,q:\R_{>0}\to\R$ defined by the differential-delay equations
$$
\frac{d}{du}\bigl(up(u)\bigr)=\kappa\bigl(p(u)-p(u+1)\bigr),
\quad\lim_{u\to\infty}up(u)=1
$$
and
\begin{equation}\label{eqn:qdef}
\frac{d}{du}\bigl(uq(u)\bigr)=\kappa\bigl(q(u)+q(u+1)\bigr),
\quad\lim_{u\to\infty}u^{1-2\kappa}q(u)=1.
\end{equation}
Second, $\widetilde{\Pi},\widetilde{\Xi}:(2,\infty)\to\R$ are defined by
\begin{equation}\label{eqn:pixidef}
\begin{aligned}
\widetilde{\Pi}(u)&=\frac{up(u)}{\sigma(u)}
+\kappa\int_{u-2}^u\frac{p(t+1)}{\sigma(t)}\,dt,\\
\widetilde{\Xi}(u)&=\frac{uq(u)}{\sigma(u)}
-\kappa\int_{u-2}^u\frac{q(t+1)}{\sigma(t)}\,dt.
\end{aligned}
\end{equation}
Then for $\alpha$, $\beta$ we may take any solution to the pair of
equations
\begin{equation}\label{eqn:alphabetadef}
\begin{aligned}
\widetilde{\Pi}(\alpha)+\kappa(\alpha-1)^{1-\kappa}p(\alpha-1)
\int_{\beta}^{\alpha-1}\frac{t^{\kappa-1}}{\sigma(t-1)}\,dt&=2,\\
\widetilde{\Xi}(\alpha)-\kappa(\alpha-1)^{1-\kappa}q(\alpha-1)
\int_{\beta}^{\alpha-1}\frac{t^{\kappa-1}}{\sigma(t-1)}\,dt&=0.
\end{aligned}
\end{equation}
(Conjecturally, the solution is unique.)
We describe the computation of each piece in turn.

\subsubsection{Computing $p$}
By \cite[(12.11)]{dh-book}, we have the following integral representation for
$p$:
\begin{equation}\label{eqn:pdiffeq}
p(u)=\int_0^\infty\exp\big(-\kappa\Ein(x)-ux\big)\,dx,
\end{equation}
where
\begin{equation}\label{eqn:Edef}
\Ein(x)=\int_0^x\frac{1-e^{-t}}t\,dt
=\sum_{n=1}^\infty\frac{(-1)^{n-1}}{n\cdot n!}x^n.
\end{equation}
We split the integral \eqref{eqn:pdiffeq} as
\begin{align*}
&\sum_{m=0}^{M-1}\int_m^{m+1}\exp\big(-\kappa\Ein(x)-ux\big)\,dx
+\int_M^\infty\exp\big(-\kappa\Ein(x)-ux\big)\,dx\\
&=\sum_{m=0}^{M-1}e^{-mu}
\int_0^1\exp\big(-\kappa\Ein(m+z)-uz\big)\,dz
+\int_M^\infty\exp\big(-\kappa\Ein(x)-ux\big)\,dx
\end{align*}
for some $M\in\Z_{>0}$.
For the final term, we use the crude estimate $\Ein(x)\ge\Ein(M)$, so that
$$
\int_M^\infty\exp\big(-\kappa\Ein(x)-ux\big)\,dx
=\Theta([0,1])u^{-1}e^{-\kappa\Ein(M)-Mu}.
$$

Using the series in \eqref{eqn:Edef}, for any positive integer $N$ and
any $z\in[0,1]$, we have
\begin{equation}\label{eqn:Eseries}
\Ein(z)=\sum_{n=1}^{N-1}\frac{(-1)^{n-1}}{n\cdot n!}z^n
+\Theta([0,1])\frac{(-1)^{N-1}}{N\cdot N!}z^N.
\end{equation}
On the other hand, differentiating the integral representation in
\eqref{eqn:Edef}, we have
\begin{equation}\label{eqn:Ederiv}
\Ein^{(k)}(x)=(-1)^{k-1}(k-1)!x^{-k}\left(1-e^{-x}
\sum_{n=0}^{k-1}\frac{x^n}{n!}\right),
\end{equation}
so that
$$
0\le(-1)^{k-1}\frac{\Ein^{(k)}(x)}{k!}=
\frac1ke^{-x}\sum_{n=0}^\infty\frac{x^n}{(n+k)!}
\le\frac1{k\cdot k!}.
$$
Comparing series term by term, we see that for any fixed $x>0$,
the sequence $\Ein^{(k)}(x)/k!$ is alternating and decreasing in
magnitude. Hence,
$$
\Ein(m+z)=\sum_{n=0}^{N-1}\frac{\Ein^{(n)}(m)}{n!}z^n
+\Theta([0,1])\frac{\Ein^{(N)}(m)}{N!}z^N
\quad\mbox{for }z\in[0,1].
$$

For each $m\in\{0,1,\ldots,M-1\}$,
we use \eqref{eqn:Eseries} and \eqref{eqn:Ederiv} to compute
an expansion for $\Ein(m+z)$ for $z\in[0,1]$.
(The constant terms $\Ein(m)$ are computed recursively using the
expansion for $m-1$.)
Applying the algorithms from \S\ref{sec:powerseries},
we obtain an expansion of the form
$$
\exp\big(-\kappa\Ein(m+z)\big)=\sum_{n=0}^Na_{m,n}(z)z^n
\quad\mbox{for }z\in[0,1].
$$
Thus,
$$
p(u)=\sum_{m=0}^{M-1}e^{-mu}
\sum_{n=0}^N\int_0^1 a_{m,n}(z)z^ne^{-uz}\,dz
+\Theta([0,1])u^{-1}e^{-\kappa\Ein(M)-Mu}.
$$

Now fix $u_0\ge 2$ and replace $u$ in the above by $u_0+u$ for some
$u\in[0,1]$. Then
$$
e^{-(u_0+u)z}=e^{-u_0z}\sum_{r=0}^Nb_r(z,u)z^ru^r,
$$
where
$$
b_r(z,u)=\begin{cases}
\frac{(-1)^r}{r!}&\mbox{if }r<N,\\
\Theta([0,1])\frac{(-1)^N}{N!}&\mbox{if }r=N.
\end{cases}
$$
Thus,
\begin{equation}\label{eqn:pexp1}
\begin{aligned}
p(u_0+u)&=\sum_{m=0}^{M-1}\sum_{r=0}^N
c_{u_0,m,r}(u)e^{-mu}u^r
+\Theta([0,1])u_0^{-1}e^{-\kappa\Ein(M)-Mu_0}\\
&=
\sum_{r=0}^N\sum_{m=0}^{M-1}
c_{u_0,m,r}(u)\left(
\sum_{n=0}^{N-r-1}\frac{(-m)^n}{n!}u^{n+r}
+\Theta([0,1])\frac{(-m)^{N-r}}{(N-r)!}u^N\right)\\
&\hspace{1cm}+\Theta([0,1])u_0^{-1}e^{-\kappa\Ein(M)-Mu_0},
\end{aligned}
\end{equation}
where
\begin{align*}
c_{u_0,m,r}(u)&=e^{-mu_0}
\sum_{n=0}^N\int_0^1
a_{m,n}(z)b_r(z,u)z^{n+r}e^{-u_0z}\,dz\\
&=e^{-mu_0}
\sum_{n=0}^N\Theta(I_{m,n,r})
\int_0^1z^{n+r}e^{-u_0z}\,dz,
\end{align*}
and $I_{m,n,r}$ is a bounding interval for $a_{m,n}b_r$.
To compute the integral on the last line accurately,
we rely on {\tt Arb}'s routines for the confluent hypergeometric
function $M$, via the identity
$$
\int_0^1z^ne^{-u_0z}\,dz
=\frac1{n+1}M(n+1,n+2,-u_0).
$$
This yields an expansion of the form
$$
p(u_0+u)=\sum_{n=0}^Np_{u_0,n}(u)u^n
\quad\mbox{for }u\in[0,1].
$$

For small $u_0$, the worst case error in \eqref{eqn:pexp1} occurs when
$m\approx N/u_0$, and is roughly of size
$u_0^{-N}$. If we aim for $B$ bits of precision for all $u_0\ge2$,
then a sensible choice of parameters is $N=B$,
$M=\lceil\frac{B\log2}{u_0}\rceil$.

\subsubsection{Computing $q$}
Since we have restricted to positive integer values of $\kappa$,
it turns out that $q$ is a polynomial of degree $2\kappa-1$.
If we put $q(u)=\sum_{n=0}^{2\kappa-1}a_nu^n$ then, solving
\eqref{eqn:qdef}, we find that the coefficients $a_n$ are given 
recursively by
$$
a_{2\kappa-1}=1,
\quad
a_r=-\frac{\kappa}{2\kappa-1-r}\sum_{n=r+1}^{2\kappa-1}{{n}\choose{r}}a_n
\quad\mbox{for }r<2\kappa-1.
$$
Once the $a_n$ have been computed, we work out the coefficients
of $q(u_0+u)$ for integers $u_0\ge1$ by polynomial arithmetic.
In order to limit the precision loss, we perform these computations in
rational arithmetic first before converting to floating point intervals.

\subsubsection{Computing $\sigma$}
Let $u_0\in\Z_{\ge0}$.  We wish to compute an expansion of the form
$$
\sigma(u_0+u)=\sum_{n=0}^N\sigma_{u_0,n}(u)u^n
\quad\mbox{for }u\in[0,1].
$$
For $u_0=0$ we have
$$
\sigma(u)=A^{-1}u^\kappa
\quad\mbox{for }u\in[0,1],
$$
where $A=\kappa!(2e^\gamma)^\kappa$,
which is an expansion of the above type provided that
$N\ge\kappa$. For $u_0=1$ we similarly have
$$
\sigma(1+u)=
A^{-1}(1+u)^\kappa=A^{-1}\sum_{n=0}^\kappa
{{\kappa}\choose{n}}u^n
\quad\mbox{for }u\in[0,1].
$$

Next, suppose that an expansion of the desired type is known for
$u_0-2$ and $u_0-1$. Solving \eqref{eqn:sigmadef}, we have
$$
\sigma(u_0+u)=\left(1+\frac{u}{u_0}\right)^\kappa\left[
\sigma(u_0)
-\frac{\kappa}{u_0}\int_0^u\left(1+\frac{t}{u_0}\right)^{-\kappa-1}
\sigma(u_0-2+t)\,dt\right].
$$
Using the expansion around $u_0-1$ to estimate $\sigma(u_0)$
and applying the formulas and algorithms from
\S\ref{sec:powerseries} in a straightforward way,
we obtain an expansion for $\sigma(u_0+u)$.

\subsubsection{Computing $1/\sigma$}
For $u_0\ge3$, we use \eqref{eqn:reciprocal}, together with the fact
that $\sigma$ is monotonic \cite[(14.6)]{dh-book}
to obtain a bounding interval, to compute
an expansion for $1/\sigma(u_0+u)$. For $u_0\in\{1,2\}$, this method
gives a poor approximation when $u$ is close to $1$ since, in each case,
the analytic function that agrees with $\sigma(u_0+u)$ on $[0,1]$ has
a zero at or just to the left of $u=-1$. This can be mitigated in
various ways, e.g.\ by using expansions around $u_0=3/2$ or $5/2$ instead.
In order to keep our later algorithms as simple and uniform as possible,
we use the following \emph{ad hoc} methods to obtain better polynomial
approximations of $1/\sigma(u_0+u)$ for $u_0\in\{1,2\}$.

First, for $u_0=1$ and $u\in[0,1]$, we have
\begin{align*}
(1+u)^{-\kappa}&=2^{-\kappa}\bigl(1-\tfrac{1-u}2\bigr)^{-\kappa}
=\sum_{n=0}^\infty2^{-\kappa-n}{{\kappa+n-1}\choose{n}}(1-u)^n.
\end{align*}
We truncate the series at $N$ and express it in the form
$\sum_{n=0}^Nc_nu^n$ by polynomial arithmetic. Since the series
has positive terms, the greatest error occurs at $u=0$. Hence, to
account for the error, it suffices to replace the constant term $c_0$
by $\Theta([c_0,1])$.

Turning to $u_0=2$, we solve \eqref{eqn:sigmadef} to find,
for $u\in[0,1]$,
\begin{equation}\label{eqn:sigma3-u}
\begin{split}
A\sigma(3-u)=~&
(3-u)^\kappa\left(1-\kappa\log\bigl(\tfrac{3-u}2\bigr)\right)
+\kappa\sum_{n=1}^\kappa\frac{(1-u)^n(3-u)^{\kappa-n}}{n}\\
=~&(3-u)^\kappa\left(1-\kappa\log\tfrac32
+\kappa\sum_{n=1}^\infty\frac{u^n}{n3^n}\right)
+\kappa\sum_{n=1}^\kappa\frac{(1-u)^n(3-u)^{\kappa-n}}{n}\\
=~&(3-u)^\kappa\left(1-\kappa\log\tfrac32
+\kappa\sum_{n=1}^N\frac{u^n}{n3^n}
+\frac{\kappa\Theta([0,1])}{2(N+1)3^N}u^{N+1}\right)\\
&\quad 
+\kappa\sum_{n=1}^\kappa\frac{(1-u)^n(3-u)^{\kappa-n}}{n}.
\end{split}
\end{equation}
We compute this series using the arithmetic of
\S\ref{sec:powerseries} and invert it to obtain a
degree-$(N+1)$ approximation for $1/\sigma(3-u)$.  Since the error in
\eqref{eqn:sigma3-u} is concentrated in the highest-degree term, the same
is true of our approximation to the inverse. Hence, in the degree-$N$
polynomial part we may replace $u$ by $1-u$ without excessive precision
loss. (There is some loss due to cancellation between terms, but that
can be compensated for by increasing the working precision.)
In the final term, we simply replace $u^{N+1}$ by $\Theta([0,1])$
and incorporate it into the constant term.

Finally, for $u_0=0$, $1/\sigma(u_0+u)=Au^{-\kappa}$ has an unavoidable
singularity at $u=0$. However, the computation of $1/\sigma$
in this region is only needed in order to compute the integrals
$\int_w^{v-1}F(u)\,du$ and $\int_w^{v-1}\frac{F(u)}{v-u}\,du$, and we
can evaluate the contribution from $u\in[w,1]$ by elementary means,
as detailed below.

\subsubsection{Computing $\widetilde{\Pi}$ and $\widetilde{\Xi}$}
For integers $u_0\ge2$, we compute expansions for
$\widetilde{\Pi}(u_0+u)$ and $\widetilde{\Xi}(u_0+u)$
via \eqref{eqn:pixidef}, using our expansions for $p$, $q$,
$1/\sigma$ and a straightforward application of the operations from
\S\ref{sec:powerseries}.

\subsubsection{Computing $\alpha$ and $\beta$}
With these preliminaries in place, we can now proceed with the
computation of $\alpha$ and $\beta$. First, eliminating the integral
terms in \eqref{eqn:alphabetadef}, we see that $\alpha$ is a root of
the function
$$
l(u)=
\bigl(\widetilde{\Pi}(u)-2\bigr)q(u-1)+\widetilde{\Xi}(u)p(u-1).
$$
In \cite{dh-parameters} it was shown that $l$ has a root satisfying
$\rho+1<\alpha<\rho+O(1)$, where $\rho$ denotes the greatest
real zero of $q$. In every case that we examined we found that
$l(\rho+1)l(\rho+2)<0$, so there is a solution $\alpha\in(\rho+1,\rho+2)$.
(In \cite[Prop.~17.3]{dh-book} it is proven that there is a solution
with $\alpha<3.75\kappa$, so it is not necessary to assume that
$\alpha<\rho+2$, but we did so for efficiency.)  To locate it
precisely, we compute expansions for $l(u_0+u)$ as above and apply
bisection\footnote{Since we store all numbers as intervals, if it
happens that $\alpha$ is very close to an integer then it might not be
possible to decide which value of $u_0$ to use at some point during the
bisection algorithm.  One could handle this by taking the union of the
intervals arising from all relevant values of $u_0$. However, we expect
that this does not occur generically; in our implementation, we simply
output an error message and exit when it occurs, and we never observed
it in practice.  The same issue arises at various points for computing
$\beta$, as well as the parameters $v$ and $w$ below. We handled these
in a similar fashion and proceed without further comment.}, beginning
with the interval $(\rho+1,\rho+2)$.

Once $\alpha$ has been found, $\beta<\alpha-1$ is uniquely determined by
\eqref{eqn:alphabetadef}, since $\frac{t^{\kappa-1}}{\sigma(t-1)}>0$
and $\frac{\widetilde{\Xi}(\alpha)}{q(\alpha-1)}>0$
\cite[(17.9)]{dh-book}.  By \cite[Prop.~7.3]{dhr2} and
\cite[Table 17.1]{dh-book}, we have $\beta>2\kappa$ for all
integral $\kappa\ge2$; to locate it we compute expansions for
$\kappa\int_{u_0+u}^{\alpha-1}\frac{t^{\kappa-1}}{\sigma(t-1)}\,dt$ and
again apply bisection, beginning with the interval $(2\kappa,\alpha-1)$.

Our computed values of $\alpha$ and $\beta$ for $\kappa\le50$ are
displayed to $20$ decimal place accuracy at \cite{sourcecode}.

\subsubsection{Computing $F$ and $f$}
We compute expansions for $F(u_0+u)$ and $f(u_0+u)$ by a similar
strategy to that for $\sigma$. Note that for
$u_0\ge\lfloor\alpha\rfloor$, $F(u_0+u)$ changes behaviour at
$u=\{\alpha\}=\alpha-\lfloor\alpha\rfloor$, and similarly for $f(u_0+u)$
when $u_0\ge\lfloor\alpha\rfloor+1$. Hence, for each $u_0$, we compute
two approximations for $F(u_0+u)$ and $f(u_0+u)$, one valid for
$u\in[0,\{\alpha\}]$ and one for $u\in[\{\alpha\},1]$. Solving
\eqref{eqn:Ffdef}, we have
$$
F(u_0+u)=\left(1+\frac{u}{u_0}\right)^{-\kappa}\left[c+\frac{\kappa}{u_0}
\int_0^u\left(1+\frac{t}{u_0}\right)^{\kappa-1}f(u_0-1+t)\,dt\right],
$$
where $c$ is chosen to ensure continuity at $u=0$ or $u=\{\alpha\}$,
and similarly with the roles of $F$ and $f$ reversed.
Applying this iteratively yields the desired expansions.

\subsection{Optimising $R(v,w)$}
Differentiating \eqref{eqn:Rdef}, we find that
\begin{equation}\label{eqn:dRdw}
f(v)(v-w)^2\frac{\partial R}{\partial w}=
hvf(v)-\kappa\int_w^{v-1}F(u)\,du.
\end{equation}
Further, we have
\begin{align*}
\frac{d}{dv}&\left(
hvf(v)-\kappa\int_{\beta-1}^{v-1}F(u)\,du\right)
=h(f(v)+vf'(v))-\kappa F(v-1)\\
&=(h-\kappa)F(v-1)+h(1-\kappa^{-1})vf'(v)>0
\quad\mbox{for }v>\beta,
\end{align*}
and since
$[hvf(v)-\kappa\int_{\beta-1}^{v-1}F(u)\,du]_{v=\beta}=0$,
it follows that 
$hvf(v)-\kappa\int_{\beta-1}^{v-1}F(u)\,du>0$ for all $v>\beta$.

Since $F(u)>1$ for all $u$, \eqref{eqn:dRdw} is a strictly increasing
function of $w>0$. It tends to $-\infty$ as $w\to0^+$ and, by the above,
is positive at $w=\beta-1$; hence, for each fixed $v>\beta$,
there is a unique $w=w(v)\in(0,\beta-1)$ at which $R(v,w)$ is
minimal.  Given a value of $v$, we compute $w(v)$ as follows.
First we compute
$c(v)=hvf(v)-\kappa\int_2^{v-1}F(u)\,du$. If $c(v)\ge0$ then
$w(v)\le2$, and we have
$$
c(v)=\kappa\int_{w(v)}^2F(u)\,du=A\kappa\int_{w(v)}^2u^{-\kappa}\,du
=\frac{A\kappa}{\kappa-1}\bigl({w(v)}^{1-\kappa}-2^{1-\kappa}\bigr),
$$
which we then solve for $w(v)$.  Otherwise, $w(v)\in(2,\beta-1)$, and we
find the root of \eqref{eqn:dRdw} by bisection.

Replacing $w$ by $w(v)$ in \eqref{eqn:Rdef} and applying
\eqref{eqn:dRdw}, we have
$$
R(v,w(v))=\frac{\kappa}{f(v)}\int_{w(v)}^{v-1}\frac{F(u)}{v-u}\,du.
$$
To compute the integral, we split it over the intervals
$[u_0,u_0+\{\alpha\}]$ and $[u_0+\{\alpha\},u_0+1]$ for each relevant
integer $u_0$, taking the appropriate subinterval around the endpoints
$w(v)$ and $v-1$.  For each interval $[u_1,u_2]\subseteq[u_0,u_0+1]$
with $u_0\ge2$ we use the approximation
$$
\frac1{v-u}=\frac1{(v-u_0)-(u-u_0)}=
\sum_{n=0}^{N-1}(v-u_0)^{-n-1}(u-u_0)^n
+\frac{(v-u_0)^{-N}(u-u_0)^N}{v-\Theta([u_1,u_2])}.
$$
Note that we always have $\frac{u_2-u_0}{v-u_0}\le\frac12$, so this
is accurate to at least $N$ bits.
When $w(v)<2$, we compute the contribution from $[w(v),2]$ directly,
via
\begin{align*}
\int_{w(v)}^2\frac{du}{u^\kappa(v-u)}=
\left[v^{-\kappa}\log\frac1{v-u}-
\sum_{n=1}^{\kappa-1}\frac{v^{n-\kappa}u^{-n}}{n}\right]_{u=w(v)}^{u=2}.
\end{align*}

Thus, we have reduced the problem to one of minimising the single-variable
function $R(v)=R(v,w(v))$.  Presumably, $R(v)$ has a unique local (and
global) minimum on $(\beta,\infty)$; although we are not aware of a
proof of this, we found no violations of it in practice.  Empirically,
the optimal $v$ is always larger than $\alpha+3$. A good upper bound that
is uniform in all parameters is not as easy to describe, but by trial
and error we found that $v<200$ for every $\kappa\le h\le50$. Hence, to
compute the table in \cite{sourcecode}, we first worked out the expansions
for $F$ and $f$ for all $u_0<200$. We then used a simple linear search
through the points $v=\lceil\alpha\rceil+n$ for $n=1,2,3\ldots$ to find
one at which $R(v)<\min(R(v-1),R(v+1))$.  Finally, we zoomed in on the
minimum by a simple bisection algorithm.

\subsection{Working precision}
We conclude this section with a few words on numerical precision. Since
our method for computing $f$ and $F$ is iterative, the total precision
loss accumulates as $u_0$ increases. This mainly affects large values
of $\kappa$, for which the optimal $v$ can be large. We
counteracted this effect by increasing the working precision with
$\kappa$. Empirically we found that using $12(\kappa+10)$ bits of
precision was enough to determine the minimum value of $R(v,w)$ to at
least $20$ significant (decimal) digits, for all $\kappa\le h\le50$.
The demand for high precision is the main obstacle to extending our
computations to larger values of $\kappa$, though we expect that
$\kappa$ in the hundreds would still be feasible.


\bibliographystyle{amsplain}

\begin{dajauthors}
\begin{authorinfo}[ARB]
  Andrew R. Booker\\
Doctor\\
University of Bristol\\ 
Bristol, UK\\
andrew.booker\imageat{}bristol\imagedot{}ac\imagedot{}uk\\
\end{authorinfo}

\begin{authorinfo}[ARB]
  Tim D. Browning\\
Professor\\
University of Bristol\\ 
Bristol, UK\\
t.d.browning\imageat{}bristol\imagedot{}ac\imagedot{}uk\\
\end{authorinfo}

\end{dajauthors}

\end{document}